\newcommand{\subconstituent}[1]{\Omega_{#1}}
\newcommand{\harm}[1]{U_{#1}}
\newcommand{\gauss}[2]{\genfrac{[}{]}{0pt}{}{#1}{#2}}
\newcommand{\dgauss}[2]{{ \displaystyle \genfrac{[}{]}{0pt}{}{#1}{#2} }}
\theoremstyle{plain}
\newtheorem{thm}{Theorem}[section]
\newtheorem{lem}[thm]{Lemma}
\newtheorem{prop}[thm]{Proposition}
\theoremstyle{remark}
\newtheorem{rem}[thm]{Remark}
\title[A cross-intersection theorem for vector spaces]{A cross-intersection theorem for vector spaces based on semidefinite programming}
\author{Sho Suda}
\address{Department of Mathematics, Aichi University of Education, Kariya 448-8542, Japan}
\email{suda@auecc.aichi-edu.ac.jp}
\author{Hajime Tanaka}
\address{Research Center for Pure and Applied Mathematics, Graduate School of Information Sciences, Tohoku University, Sendai 980-8579, Japan}
\email{htanaka@m.tohoku.ac.jp}
\keywords{Intersection theorem, cross-intersecting families, semidefinite programming}
\subjclass[2010]{Primary 05D05, 90C22; Secondary 90C27, 05C50.} % 05D05: Extremal set theory; 90C22: Semidefinite programming; 90C27: Combinatorial optimization; 05C50 Graphs and linear algebra (matrices, eigenvalues, etc.)
\begin{document}

\begin{abstract}
Let $\mathscr{F}$ and $\mathscr{G}$ be families of $k$- and $\ell$-dimensional subspaces, respectively, of a given $n$-dimensional vector space over a finite field $\mathbb{F}_q$.
Suppose that $x\cap y \ne 0$ for all $x\in \mathscr{F}$ and $y\in \mathscr{G}$.
By explicitly constructing optimal feasible solutions to a semidefinite programming problem which is akin to Lov\'{a}sz's theta function, we show that $|\mathscr{F}| |\mathscr{G}| \leqslant \gauss{n-1}{k-1} \gauss{n-1}{\ell-1}$, provided that $n\geqslant 2k$ and $n\geqslant 2\ell$.
The characterization of the extremal families is also established.
\end{abstract}

\maketitle

\section{Introduction}

Let $V$ be an $n$-dimensional vector space over a finite field $\mathbb{F}_q$.
Let $\subconstituent{}$ be the set of subspaces of $V$.
For $k=0,\dots,n$, let $\subconstituent{k}$ be the subset of $\subconstituent{}$ consisting of the $k$-dimensional subspaces of $V$.
Two families $\mathscr{F}\subseteq \subconstituent{k}$ and $\mathscr{G} \subseteq \subconstituent{\ell}$ are said to be \emph{cross-intersecting} if $x\cap y \ne 0$ for all $x\in \mathscr{F}$ and $y\in \mathscr{G}$.
In the present paper, we apply semidefinite programming to prove the following theorem:

\begin{thm}\label{cross-intersection theorem}
Let $\mathscr{F}\subseteq \subconstituent{k}$ and $\mathscr{G} \subseteq \subconstituent{\ell}$ be a pair of cross-intersecting families.
Suppose that $n\geqslant 2k$ and $n\geqslant 2\ell$.
Then\footnote{Here $\gauss{a}{b}=\gauss{a}{b}_q$ denotes the Gaussian coefficient: $\gauss{a}{b}=\prod_{i=0}^{b-1}((q^{a-i}-1)/(q^{b-i}-1))$.}
\begin{equation*}
	|\mathscr{F}| |\mathscr{G}| \leqslant \gauss{n-1}{k-1} \gauss{n-1}{\ell-1},
\end{equation*}
and equality holds if and only if either (i) there is $z \in \subconstituent{1}$ such that $\mathscr{F}=\{ x \in \subconstituent{k} : z \subseteq x \}$ and $\mathscr{G}=\{ x \in \subconstituent{\ell} : z \subseteq x \}$, or (ii) $n=2k=2\ell$ and there is $z\in \subconstituent{2k-1}$ such that $\mathscr{F}=\mathscr{G}=\{ x \in \subconstituent{k} : x\subseteq z\}$.
\end{thm}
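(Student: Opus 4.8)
\emph{Overall plan.} I would run the whole argument inside the $\mathrm{GL}(V)$-invariant combinatorics of the $q$-Johnson (Grassmann) association schemes on $\subconstituent k$ and $\subconstituent\ell$, linked by the $\mathbb F_q$-linear incidence between subspaces. Write $\mathbb R^{\subconstituent k}=\bigoplus_{i=0}^{k}\harm i$ and $\mathbb R^{\subconstituent\ell}=\bigoplus_{i=0}^{\ell}\harm i$ for the decompositions into common eigenspaces, where $\dim\harm i=\gauss ni-\gauss n{i-1}$ and $\harm 0=\langle\mathbf 1\rangle$, and let $A$ be the $0/1$ ``disjointness'' matrix with rows indexed by $\subconstituent k$ and columns by $\subconstituent\ell$, $A_{x,y}=1$ iff $x\cap y=0$. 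The structural input is that $A$ intertwines the pieces $\harm i$ on the two sides for $i\le\min\{k,\ell\}$ and annihilates the rest, acting on $\harm i$ as an explicit scalar $\theta_i$ that alternates in sign and equals a power of $q$ times a Gaussian coefficient; the cross-intersecting hypothesis is precisely $\mathbf 1_{\mathscr F}^{\top}A\,\mathbf 1_{\mathscr G}=0$. Against this background the proof has three parts: (1) set up a semidefinite relaxation over $\subconstituent k\sqcup\subconstituent\ell$ --- ``akin to Lov\'asz's $\vartheta$'' in that its feasible points are positive semidefinite block matrices respecting the disjointness pattern --- whose optimum upper-bounds $|\mathscr F|\,|\mathscr G|$ for every cross-intersecting pair, the $0/1$ data of such a pair giving a feasible point of that objective value; (2) construct explicit feasible solutions of the relaxation and of its dual, both of value $\gauss{n-1}{k-1}\gauss{n-1}{\ell-1}$, which proves the inequality and identifies the exact optimum; (3) extract the extremal configurations from complementary slackness.

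\emph{The explicit optimum (the main obstacle).} Since the relaxation is $\mathrm{GL}(V)$-invariant, its optimum is attained on a solution lying in the algebra generated by $A$ and the two Bose--Mesner algebras --- a short linear combination of the spectral projections $E_0,E_1,\dots$ on each side, glued along $A$. I would use the low-rank ansatz built from $E_0$, $E_1$ and $A$: fix the scalar parameters so that the off-diagonal block is supported exactly on the disjointness relation (a linear condition in the $\theta_i$, solvable thanks to the sign pattern above) and the diagonal blocks are normalised appropriately, and then check positive semidefiniteness eigenspace by eigenspace. This collapses to a finite list of scalar inequalities in $\theta_0,\theta_1,\dots$ and Gaussian coefficients, and it is exactly at the $\harm 1$ piece (and at the ``boundary'' piece $\harm{\min\{k,\ell\}}$) that the hypotheses $n\ge 2k$ and $n\ge 2\ell$ enter: they force $|\theta_1|$ to be small enough for the block matrix to stay positive semidefinite. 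I expect the delicate points to be choosing the right ansatz and verifying these boundary inequalities. With the solution in hand, evaluating the objective is a Gaussian-coefficient identity yielding $\gauss{n-1}{k-1}\gauss{n-1}{\ell-1}$, and feeding $\mathbf 1_{\mathscr F}$ and $\mathbf 1_{\mathscr G}$ (suitably rescaled against one another) into the weak-duality inequality, together with $\mathbf 1_{\mathscr F}^{\top}A\,\mathbf 1_{\mathscr G}=0$, gives $|\mathscr F|\,|\mathscr G|\le\gauss{n-1}{k-1}\gauss{n-1}{\ell-1}$. Note that the plain ratio (Hoffman-type) bound from $A$ alone is not tight in this range, which is why the genuinely semidefinite refinement --- the $E_1$ term in the ansatz --- is needed.

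\emph{Equality.} If $|\mathscr F|\,|\mathscr G|=\gauss{n-1}{k-1}\gauss{n-1}{\ell-1}$ then both families are nonempty, and complementary slackness forces the slack to vanish in every eigenspace, so $\mathbf 1_{\mathscr F}\in\harm 0\oplus\harm 1$ on the $\subconstituent k$ side and $\mathbf 1_{\mathscr G}\in\harm 0\oplus\harm 1$ on the $\subconstituent\ell$ side. I would then invoke the rigidity classification of $0/1$ vectors of ``degree at most $1$'' for the Grassmann scheme: a nonempty such family in $\subconstituent k$ is a point-pencil $\{x:z\subseteq x\}$ with $z\in\subconstituent 1$ (size $\gauss{n-1}{k-1}$), a dual pencil $\{x:x\subseteq z\}$ with $z\in\subconstituent{n-1}$ (size $\gauss{n-1}{k}$), a complement of one of these, or all of $\subconstituent k$. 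A brief size-and-disjointness bookkeeping then finishes the job: complements and $\subconstituent k$ itself cannot cross-intersect any nonempty family once $n\ge k+\ell$ (which follows from $n\ge 2k$ and $n\ge 2\ell$), a disjoint pair being exhibited directly; so $\mathscr F$ and $\mathscr G$ are pencils or dual pencils; a pencil together with a dual pencil again admits a disjoint pair, and two dual pencils have size product $\gauss{n-1}{k}\gauss{n-1}{\ell}$, which equals $\gauss{n-1}{k-1}\gauss{n-1}{\ell-1}$ only when $n=2k=2\ell$; finally, two pencils with distinct centres --- or, when $n=2k=2\ell$, two dual pencils with distinct centres --- again admit a disjoint pair. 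Hence the centres coincide, which is exactly case (i) in general and case (ii) in the regime $n=2k=2\ell$; the converse, that the families in (i) and (ii) are cross-intersecting and attain the bound, is immediate.
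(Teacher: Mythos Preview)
Your plan matches the paper's: a $\vartheta$-style SDP over $\subconstituent{k}\sqcup\subconstituent{\ell}$, block-diagonalized through the Grassmann harmonics $\harm{i}$, with an explicit dual feasible solution of value $\gauss{n-1}{k-1}^{1/2}\gauss{n-1}{\ell-1}^{1/2}$ certifying the bound by weak duality. The paper's dual ansatz is phrased not via the projections $E_0,E_1$ but by taking the nonnegative slack matrix to be $a(\lambda)\,\overline{W}_{\!k,k}+\lambda\,\overline{W}_{\!\ell,\ell}$ for small $\lambda>0$, with the cross coefficient $b(\lambda)$ tuned so that the $2\times 2$ blocks $S_0(\lambda)$ and $S_1(\lambda)$ come out exactly rank~$1$; the residual work is the list of scalar inequalities $\det S_i(0)>0$ for $2\le i\le\ell$ and $s_i(0)>0$ for $\ell<i\le k$. (So it is not really at $\harm{1}$ that the hypotheses $n\ge 2k,\,n\ge 2\ell$ bite---$S_0$ and $S_1$ are singular by design---but in the uniform estimates for the blocks with $i\ge 2$.)

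The genuine divergence is in the equality analysis. You read off from $S\bullet X_{\mathscr F,\mathscr G}=0$ that $\mathbf 1_{\mathscr F},\mathbf 1_{\mathscr G}\in\harm{0}\oplus\harm{1}$ (valid, since in the paper's construction the $S_i$ with $i\ge 2$ are positive definite), then invoke the Boolean-degree-$1$ classification for the Grassmann scheme and run a disjoint-pair case analysis. The paper instead exploits its specific choice of slack: because the nonnegative matrix is $a(\lambda)\,\overline{W}_{\!k,k}+\lambda\,\overline{W}_{\!\ell,\ell}$ with both coefficients positive, complementary slackness $A\bullet X_{\mathscr F,\mathscr G}=0$ forces $\overline{W}_{\!k,k}\bullet X_{\mathscr F,\mathscr G}=\overline{W}_{\!\ell,\ell}\bullet X_{\mathscr F,\mathscr G}=0$, i.e.\ each of $\mathscr F$ and $\mathscr G$ is \emph{self}-intersecting; the $q$-Erd\H{o}s--Ko--Rado theorem, with its known uniqueness statement, then finishes the classification in one line. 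Your route is correct too, but it trades the single appeal to $q$-EKR for the degree-$1$ rigidity result plus several ad hoc disjoint-pair constructions; the paper's route is shorter precisely because the nonnegative slack was engineered (via the within-type Kneser matrices and the free parameter $\lambda$) to make $q$-EKR directly applicable.
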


\noindent
This is a $q$-analogue of a theorem about cross-intersecting families of subsets of a given $n$-set, established by Pyber \cite{Pyber1986JCTA} and Matsumoto and Tokushige \cite{MT1989JCTA}.\footnote{The case $k=\ell$ in Theorem \ref{cross-intersection theorem} has recently been settled by Tokushige \cite{Tokushige2012pre} by using the eigenvalue method.}
Their proofs are based on the Kruskal--Katona theorem (cf.~\cite{FT1994P}).
We may remark that a $q$-analogue of the Kruskal--Katona theorem has been obtained, but only in a weaker form which is not yet sufficient to prove Theorem \ref{cross-intersection theorem} (cf.~\cite{CP2010JCTA}).

Linear programming, in particular its duality theory, has been a powerful tool to study various combinatorial configurations.
One of the pioneers is Delsarte \cite{Delsarte1973PRRS}.
His famous LP bound for cliques in association schemes is closely related to Lov\'{a}sz's theta function bound \cite{Lovasz1979IEEE} on the Shannon capacities of graphs, but the corresponding SDP problems are drastically reduced to LP by making full use of the regularity of association schemes; cf.~\cite{Schrijver1979IEEE}.
Indeed, it is this method that made it possible to prove the Erd\H{o}s--Ko--Rado theorem \cite{EKR1961QJMO} as well as its $q$-analogue in full generality; cf.~\cite{Wilson1984C,Tanaka2006JCTA,Tanaka2012C}.
Theorem \ref{cross-intersection theorem} is an extension of the $q$-Erd\H{o}s--Ko--Rado theorem, and we shall consider a `bipartite variant' of an SDP problem defining the theta function.
It involves semidefinite constraints of $2\times 2$ matrices even after a reduction, but we are still able to construct optimal feasible solutions to obtain the bound, and then the duality of SDP again provides enough information to describe the extremal families.
Recently there have been several attempts to extend Delsarte's theory by shifting from LP to SDP (cf.~\cite{Schrijver2005IEEE}),\footnote{Delsarte's LP bound is defined on a commutative semisimple $\mathbb{C}$-algebra associated with an association scheme, called the \emph{Bose--Mesner algebra}. In the process of shifting from LP to SDP, we are also shifting to noncommutative semisimple $\mathbb{C}$-algebras, such as the \emph{Terwilliger algebra} \cite{Terwilliger1992JAC} or a \emph{coherent algebra} \cite{Higman1987LAA}.} but it seems that Theorem \ref{cross-intersection theorem} is among the most successful applications of the duality of SDP to the analysis of optimal combinatorial configurations.

\section{A semidefinite programming problem}

Let $\mathbb{R}^{\subconstituent{}\times \subconstituent{}}$ be the set of real matrices with rows and columns indexed by $\subconstituent{}$, and let $\mathbb{R}^{\subconstituent{}}$ be the set of real column vectors with coordinates indexed by $\subconstituent{}$.
We shall write $Y \bullet Z = \mathrm{trace} (Y^{\mathsf{T}} Z)$ for $Y,Z\in \mathbb{R}^{\subconstituent{}\times \subconstituent{}}$.
Let $S\mathbb{R}^{\subconstituent{}\times \subconstituent{}}$ be the set of symmetric matrices in $\mathbb{R}^{\subconstituent{}\times \subconstituent{}}$.
For $k,\ell=0,\dots,n$, we define $W_{k,\ell}, \overline{W}_{\! k,\ell}\in\mathbb{R}^{\subconstituent{}\times\subconstituent{}}$ by
\begin{align*}
	(W_{k,\ell})_{x,y} &= \begin{cases} 1 & \text{if} \ x\in\subconstituent{k}, \ y\in\subconstituent{\ell}, \ x\cap y\in\subconstituent{\min\{k,\ell\}}, \\ 0 & \text{otherwise}, \end{cases} \\
	(\overline{W}_{\! k,\ell})_{x,y} &= \begin{cases} 1 & \text{if} \ x\in\subconstituent{k}, \ y\in\subconstituent{\ell}, \ x\cap y\in\subconstituent{0}, \\ 0 & \text{otherwise}, \end{cases}
\end{align*}
for $x,y\in \subconstituent{}$.
Then $W_{k,\ell}=W_{\ell,k}^{\mathsf{T}}$ and $\overline{W}_{\! k,\ell}=\overline{W}_{\! \ell,k}^{\mathsf{T}}$.
Notice also that $I_k := W_{k,k}$ is the orthogonal projection matrix onto $\mathbb{R}^{\subconstituent{k}} \subseteq \mathbb{R}^{\subconstituent{}}$, and that $\overline{W}_{\! k,k}$ is (essentially) the adjacency matrix of the \emph{$q$-Kneser graph} $qK_{n:k}$.
For $k,\ell = 0,\dots,n$, we define $J_{k,\ell} = I_k J I_{\ell}$, where $J \in \mathbb{R}^{\subconstituent{} \times \subconstituent{}}$ denotes the all ones matrix.

Let $k,\ell=1,\dots,\lfloor\frac{n}{2}\rfloor$ and assume for the moment that $k>\ell$.\footnote{\label{k=l} It should be remarked that the assumption $k > \ell$ is just for notational convenience. When $k=\ell$, we understand that $\subconstituent{\ell}$ is a distinct copy of $\subconstituent{k}$, and work instead with matrices with rows and columns indexed by $\subconstituent{} \cup \subconstituent{\ell}$.}
Let $\mathscr{F}\subseteq \subconstituent{k}$ and $\mathscr{G} \subseteq \subconstituent{\ell}$ be a pair of (nonempty) cross-intersecting families.
Let $\varphi, \psi \in \mathbb{R}^{\subconstituent{}}$ be the characteristic vectors of $\mathscr{F}$ and $\mathscr{G}$, respectively.
Then it follows that the matrix
\begin{equation*}
	X_{\mathscr{F}\! , \mathscr{G}} = \left(\frac{\varphi}{||\varphi||} + \frac{\psi}{||\psi||} \right) \!\! \left(\frac{\varphi}{||\varphi||} + \frac{\psi}{||\psi||} \right)^{\!\! \mathsf{T}} \in S \mathbb{R}^{\subconstituent{}\times \subconstituent{}}
\end{equation*}
is a feasible solution to the following SDP problem with objective value $|\mathscr{F}|^{\frac{1}{2}} |\mathscr{G}|^{\frac{1}{2}}$:
\begin{equation*}
\begin{array}{lll}
	\text{(P):} & \text{maximize} & \dfrac{1}{2} \left( J_{k,\ell} + J_{\ell,k} \right) \bullet X \\[0.1in]
		& \text{subject to} & I_k \bullet X = I_{\ell} \bullet X = 1, \\
		&& \left( \overline{W}_{\! k,\ell} + \overline{W}_{\! \ell,k} \right) \bullet X = 0, \\
		&& X \succcurlyeq 0, \ X \geqslant 0,
\end{array}
\end{equation*}
where $X \in S\mathbb{R}^{\subconstituent{} \times \subconstituent{}}$ is the variable, and $X \succcurlyeq 0$ (resp.~$X \geqslant 0$) means that $X$ is positive semidefinite (resp.~nonnegative).
The dual problem is then given by
\begin{equation*}
\begin{array}{lll}
	\text{(D):} & \text{minimize} & \alpha + \beta \\
		& \text{subject to} & S:= \alpha I_k + \beta I_{\ell} - \dfrac{1}{2} \left( J_{k,\ell} + J_{\ell,k} \right) \\[0.1in]
		&& \qquad \qquad \qquad - \gamma \left( \overline{W}_{\! k,\ell} + \overline{W}_{\! \ell,k} \right) - A \succcurlyeq 0, \\
		&& A \geqslant 0,
\end{array}
\end{equation*}
where $\alpha,\beta,\gamma\in\mathbb{R}$ and $A\in S\mathbb{R}^{\subconstituent{}\times\subconstituent{}}$ are the variables.\footnote{From the constraints it follows that $A_{x,y}=0$ unless $x,y\in\subconstituent{k}\cup\subconstituent{\ell}$.}
Indeed, for any feasible solutions to (P) and (D), we have
\begin{align*}
	\alpha+\beta - \frac{1}{2} \left( J_{k,\ell} + J_{\ell,k} \right) \bullet X &= \left(\alpha I_k + \beta I_{\ell} - \frac{1}{2} \left( J_{k,\ell} + J_{\ell,k} \right) \right) \bullet X \\
	&\geqslant \left( \gamma \left( \overline{W}_{\! k,\ell} + \overline{W}_{\! \ell,k} \right) + A \right) \bullet X \\
	&= A \bullet X \\
	&\geqslant 0.
\end{align*}
In particular, $(\alpha+\beta)^2$ gives an upper bound on $|\mathscr{F}| |\mathscr{G}|$.
Furthermore, notice that if $\alpha+\beta=\frac{1}{2} \left( J_{k,\ell} + J_{\ell,k} \right) \bullet X$ then $S\bullet X=A\bullet X=0$.
For the rest of this paper, we shall consider the following one-parameter family:
\begin{equation}\label{one-parameter family}
	\alpha=\beta=\frac{1}{2} \gauss{n-1}{k-1}^{\frac{1}{2}} \gauss{n-1}{\ell-1}^{\frac{1}{2}}, \quad \gamma=b(\lambda), \quad A=a (\lambda) \, \overline{W}_{\! k,k} + \lambda \, \overline{W}_{\! \ell,\ell},
\end{equation}
where $\lambda\in\mathbb{R}$, and
\begin{align*}
	q^{k^2} (q^k-1) \gauss{n-k}{k} a (\lambda) =& \, \frac{1}{2} q^{\ell} ( q^{k-\ell} - 1) \gauss{n-1}{k-1}^{\frac{1}{2}} \gauss{n-1}{\ell-1}^{\frac{1}{2}} \\
	& \qquad + q^{\ell^2} (q^{\ell}-1) \gauss{n-\ell}{\ell} \lambda, \\
	q^{k\ell} \gauss{n-k}{\ell} b(\lambda) =& - \frac{1}{2} q^{\ell} \gauss{n-1}{\ell} - q^{\ell^2} \gauss{n-\ell}{\ell} \gauss{n-1}{\ell-1}^{\frac{1}{2}} \gauss{n-1}{k-1}^{-\frac{1}{2}} \lambda.
\end{align*}
In Section \ref{sec: optimal feasible solutions}, we shall prove the following theorem:

\begin{thm}\label{feasible solutions}
For sufficiently small $\lambda >0$, \eqref{one-parameter family} gives a feasible solution to (D).
\end{thm}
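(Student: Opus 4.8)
The plan is to show that the matrix $S$ in \eqref{one-parameter family}, which by construction lies in the span of the identity-type matrices $I_k,I_\ell$, the all-ones blocks $J_{k,\ell},J_{\ell,k}$, and the $q$-Kneser adjacency matrices $\overline{W}_{\! k,k},\overline{W}_{\! \ell,\ell}$ (together with the cross terms $\overline{W}_{\! k,\ell},\overline{W}_{\! \ell,k}$), is positive semidefinite, and that the matrix $A$ there is entrywise nonnegative. The nonnegativity of $A$ is the easy half: $\overline{W}_{\! k,k}$ and $\overline{W}_{\! \ell,\ell}$ are $0$/$1$ matrices, so it suffices to check $a(\lambda)\geqslant 0$ and $\lambda\geqslant 0$; since the defining equation expresses $a(\lambda)$ as a positive constant plus a positive multiple of $\lambda$, this holds for all $\lambda\geqslant 0$, in particular for small $\lambda>0$.

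The substance is the semidefinite constraint $S\succcurlyeq 0$. First I would decompose $\mathbb{R}^{\subconstituent{k}\cup\subconstituent{\ell}}$ into the common eigenspaces of the relevant Bose--Mesner/association-scheme operators: the subspaces $\subconstituent{j}$ (for $0\leqslant j\leqslant k$, resp. $0\leqslant j\leqslant \ell$) carry the $q$-Johnson (Grassmann) scheme structure, whose primitive idempotents give an orthogonal decomposition of $\mathbb{R}^{\subconstituent{k}}$ into irreducibles $V_0,V_1,\dots,V_k$ with explicitly known dimensions $\gauss{n}{j}-\gauss{n}{j-1}$, and likewise for $\mathbb{R}^{\subconstituent{\ell}}$. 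The key point is that the matrices $I_k J I_\ell$, $I_k\overline{W}I_\ell$, etc., all intertwine these decompositions: the operator $\overline{W}_{\! k,\ell}$ (the $q$-analogue of the inclusion/disjointness map) sends the $j$-th eigenspace of $\mathbb{R}^{\subconstituent{\ell}}$ to the $j$-th eigenspace of $\mathbb{R}^{\subconstituent{k}}$ and annihilates the rest, with a known scalar. Consequently $S$, viewed as a block operator on $\mathbb{R}^{\subconstituent{k}}\oplus\mathbb{R}^{\subconstituent{\ell}}$, is block-diagonalized by this common refinement into a direct sum, over $j=0,1,\dots,\ell$, of $2\times 2$ matrices (one-dimensional blocks for $j>\ell$, living only on the $\subconstituent{k}$ side), together with the contribution of $\overline{W}_{\! k,k}$ and $\overline{W}_{\! \ell,\ell}$ acting diagonally on those same eigenspaces. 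So $S\succcurlyeq 0$ reduces to checking that each of these small matrices is positive semidefinite: a finite list of scalar/$2\times 2$ inequalities in $q,n,k,\ell,\lambda$.

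Concretely, I would first record the eigenvalues of $\overline{W}_{\! k,k}$, $\overline{W}_{\! \ell,\ell}$, $\overline{W}_{\! k,\ell}$ on each $V_j$ (these are standard for the Grassmann scheme and, up to $q$-binomial factors, alternate in sign in $j$), plug them into the definition of $S$, and observe that on $V_0$ (the all-ones eigenspace) the $J$-terms are active and the block is designed — via the precise choice of $\alpha=\beta$ and the formulas for $a(\lambda),b(\lambda)$ — to be singular, i.e. to have a zero eigenvalue; this is exactly the complementary-slackness tuning that will later pin down the extremal families. For $j\geqslant 1$ the $J$-terms vanish, and one must verify that the remaining $2\times 2$ matrix $\bigl(\begin{smallmatrix} \alpha+a(\lambda)\mu^{k,k}_j & -b(\lambda)\mu^{k,\ell}_j \\ -b(\lambda)\mu^{\ell,k}_j & \beta+\lambda\mu^{\ell,\ell}_j\end{smallmatrix}\bigr)$ (in suitable normalization) is positive semidefinite — equivalently, that its trace and determinant are nonnegative. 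The trace is $\alpha+\beta$ plus terms that are $O(\lambda)$, hence positive for small $\lambda$; the crux is the determinant. For $\lambda=0$ one checks that $\gamma=b(0)$ and $a(0)$ are chosen so that $b(0)^2\mu^{k,\ell}_j\mu^{\ell,k}_j \leqslant (\alpha+a(0)\mu^{k,k}_j)\beta$ for all $1\leqslant j\leqslant \ell$, with strict inequality except possibly at the boundary cases — this is where the hypotheses $n\geqslant 2k$, $n\geqslant 2\ell$ enter, controlling the sign of the $q$-binomial coefficients appearing in the eigenvalues. Then, since determinant and trace depend continuously (indeed polynomially) on $\lambda$ and all the relevant inequalities are strict at $\lambda=0$ away from the known zero eigenvalue on $V_0$, they persist for sufficiently small $\lambda>0$; the one-dimensional blocks for $\ell<j\leqslant k$ are handled by the same estimate with $\beta$-terms absent.

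I expect the main obstacle to be the determinant estimate for the $V_j$-blocks with $j$ small (especially $j=1$), where the eigenvalue ratios are closest to the critical values and the inequality is tightest; getting clean bounds there will require careful bookkeeping of the Gaussian coefficients and, most likely, an explicit factorization exhibiting the relevant quantity as a product of manifestly positive terms under $n\geqslant 2k,2\ell$. A secondary point needing care is the $k=\ell$ case flagged in the footnote, where $\subconstituent{\ell}$ is a separate copy of $\subconstituent{k}$ and the blocks become genuinely $2\times 2$ on every eigenspace including $j=k$; but the same eigenvalue computation applies verbatim once one works over $\mathbb{R}^{\subconstituent{}\cup\subconstituent{\ell}}$. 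Once every block is shown to be positive semidefinite, $S\succcurlyeq 0$ follows, and together with $A\geqslant 0$ this establishes that \eqref{one-parameter family} is feasible for (D).
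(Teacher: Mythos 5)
Your overall strategy coincides with the paper's: decompose via the subspaces $\harm{i}$, use the intertwining action of the $\overline{W}$'s (Proposition \ref{block-diagonalization} and Lemma \ref{J}) to reduce $S(\lambda)\succcurlyeq 0$ to $2\times 2$ blocks $S_i(\lambda)$ for $i=0,\dots,\ell$ and scalars $s_i(\lambda)$ for $i=\ell+1,\dots,k$, and note that $A\geqslant 0$ is immediate from $a(\lambda)>0$. The genuine gap is your treatment of the block on $\harm{1}$. You propose to check a determinant inequality at $\lambda=0$ for all $1\leqslant i\leqslant\ell$, ``with strict inequality except possibly at the boundary cases,'' and then let strictness persist for small $\lambda>0$, asserting that the only zero eigenvalue at $\lambda=0$ lives on $\harm{0}$. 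That is false, and necessarily so: the dual value $\alpha+\beta$ is attained by the pair in case (i) of Theorem \ref{cross-intersection theorem}, whose vector $\varphi/\|\varphi\|+\psi/\|\psi\|$ has nonzero projections onto both $\harm{0}$ and $\harm{1}$; since a feasible $S(\lambda)\succcurlyeq 0$ must satisfy $S(\lambda)\bullet X_{\mathscr{F}\!,\mathscr{G}}=0$ for that pair, it must annihilate this vector, so $\det S_1(\lambda)=0$ identically along the family \eqref{one-parameter family} --- indeed the formulas for $a(\lambda)$ and $b(\lambda)$ were derived precisely to force this (see the footnote in Section \ref{sec: optimal feasible solutions}). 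Hence at $i=1$ there is nothing strict to perturb, and continuity alone cannot rule out that the block becomes indefinite for $\lambda>0$; the same objection applies to your $i=0$ block, where ``designed to be singular'' by itself does not give positive semidefiniteness of a symmetric $2\times 2$ matrix.

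The paper closes exactly this point by an exact computation rather than an estimate: it verifies that $S_0(\lambda)$ and $S_1(\lambda)$ are scalar multiples of two fixed rank-one positive semidefinite matrices, the scalar being positive for small $\lambda$ because the diagonal entries are positive by the evaluation \eqref{evaluation - diagonal} of $|\theta_i^{k,k}a(0)|$ (which also disposes of the scalars $s_i(\lambda)$, $i=\ell+1,\dots,k$). Only for $i=2,\dots,\ell$ does it argue as you intend, showing $\det S_i(0)>0$ by bounding $(\theta_i^{k,\ell}b(0))^2\leqslant(\theta_2^{k,\ell}b(0))^2<\frac{1}{4q}\gauss{n-1}{k-1}\gauss{n-1}{\ell-1}$ against the lower bound $\frac{1}{4}(1-\frac{1}{q})\gauss{n-1}{k-1}\gauss{n-1}{\ell-1}$ for the product of the diagonal entries, and then perturbing in $\lambda$. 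To repair your proposal you must replace the strictness-plus-continuity step at $i=0,1$ by the explicit rank-one factorizations (or an equivalent verification that $\det S_0(\lambda)$ and $\det S_1(\lambda)$ vanish identically while the diagonals stay positive); without this, the argument does not establish $S(\lambda)\succcurlyeq 0$ --- and the identically vanishing $i=1$ block is not a defect but is forced by optimality, being what later accounts for the extremal case (ii).
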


\noindent
Theorem \ref{cross-intersection theorem} quickly follows from Theorem \ref{feasible solutions}, together with the above comments:

\begin{proof}[Proof of Theorem \ref{cross-intersection theorem}]
It follows that $|\mathscr{F}| |\mathscr{G}|\leqslant (\alpha+\beta)^2=\gauss{n-1}{k-1} \gauss{n-1}{\ell-1}$.
If equality holds, then $0=A\bullet X_{\mathscr{F}\! , \mathscr{G}} = a (\lambda) \, \overline{W}_{\! k,k} \bullet X_{\mathscr{F}\! , \mathscr{G}} + \lambda \, \overline{W}_{\! \ell,\ell} \bullet X_{\mathscr{F}\! , \mathscr{G}}$.
However, since $a(\lambda)>0$ whenever $\lambda>0$, it follows that $\overline{W}_{\! k,k} \bullet X_{\mathscr{F}\! , \mathscr{G}}=\overline{W}_{\! \ell,\ell} \bullet X_{\mathscr{F}\! , \mathscr{G}}=0$ in this case.
In other words, each of $\mathscr{F}$ and $\mathscr{G}$ is an intersecting family.
From the $q$-Erd\H{o}s--Ko--Rado theorem (cf.~\cite{Newman2004D,GN2006C,Tanaka2006JCTA}) it follows that $|\mathscr{F}|=\gauss{n-1}{k-1}$ and $|\mathscr{G}|=\gauss{n-1}{\ell-1}$, and the description of $\mathscr{F}$ and $\mathscr{G}$ also easily follows from that theorem.
\end{proof}

\begin{rem}
It is possible to consider SDP problems likewise for cross $t$-intersecting families (that is to say, we require that $\dim (x\cap y) \geqslant t$ for all $x\in \mathscr{F}$ and $y\in \mathscr{G}$).
However, constructing appropriate optimal feasible solutions for $t\geqslant 2$ appears to be a quite complicated problem, except when $k=\ell$, in which case it turns out that the problem reduces to LP.
See also \cite{Tokushige2012pre}.
We shall discuss cross $t$-intersecting families in future papers.
\end{rem}

\section{Block diagonalization}

Frankl and Wilson \cite{FW1986JCTA} obtained the bound in the $q$-Erd\H{o}s--Ko--Rado theorem.
This is an application of Delsarte's LP bound, and involves (among other results) calculations of the eigenvalues of the $q$-Kneser graphs.
In this section, we shall follow their method to block-diagonalize the matrices defining (P) and (D).

For $i,k,\ell=0,\dots,n$, it follows that (\cite[pp.~231--232]{FW1986JCTA})\footnote{For integers $a$ and $b$, we interpret $\gauss{a}{b}=0$ if $a<0$ or $b<0$.}
\begin{gather}
	W_{k,\ell} W_{\ell,i} = \gauss{k-i}{\ell-i} W_{k,i} \quad \text{if} \ i\leqslant \ell\leqslant k, \label{FW 4.2} \\
	W_{i,k} \overline{W}_{\! k,\ell} = q^{\ell(k-i)} \gauss{n-i-\ell}{k-i} \overline{W}_{\! i,\ell} \quad \text{if} \ i\leqslant k, \label{FW 4.1} \\
	\overline{W}_{\! k,\ell} = \sum_{h=0}^{ \min\{ k,\ell \} } (-1)^h q^{\binom{h}{2}} W_{k,h} W_{h,\ell}. \label{FW 4.6}
\end{gather}
We define subspaces $\harm{i}$ $(i=0,\dots,\lfloor \frac{n}{2}\rfloor)$ of $\mathbb{R}^{\subconstituent{}}$ as follows:
\begin{equation*}
	\harm{i} = \{ \bm{u}\in\mathbb{R}^{\subconstituent{i}} :  W_{i-1,i} \bm{u} =0 \},
\end{equation*}
where $W_{-1,0}:=0$.
Then

\begin{lem}\label{Wkiu}
For $i=0,\dots,\lfloor \frac{n}{2}\rfloor$, $k=0,\dots,n$ and $\bm{u}\in \harm{i}$, we have $W_{k,i} \bm{u} =0$ if $k<i$ or $k>n-i$.
Moreover,
\begin{equation*}
	\overline{W}_{\! k,i} \bm{u} = (-1)^i q^{\binom{i}{2}} W_{k,i} \bm{u}.
\end{equation*}
\end{lem}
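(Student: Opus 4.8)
The plan is to prove the three assertions in the order (1) $W_{k,i}\bm{u}=0$ for $k<i$, (2) the displayed identity $\overline{W}_{\! k,i}\bm{u}=(-1)^iq^{\binom{i}{2}}W_{k,i}\bm{u}$ for all $k$, and (3) $W_{k,i}\bm{u}=0$ for $k>n-i$; claim (2) will be derived from (1) and claim (3) from (2), so there is no circularity, and we note right away that $k>n-i$ forces $k>i$, since $i\leqslant\lfloor n/2\rfloor$ gives $n-i\geqslant i$. For (1): when $k<i$ we have $i\geqslant1$, so $W_{i-1,i}$ is defined; applying \eqref{FW 4.2} with $(k,\ell,i)$ replaced by $(i,i-1,k)$ (legitimate since $k\leqslant i-1\leqslant i$) and transposing gives $W_{k,i-1}W_{i-1,i}=\gauss{i-k}{i-1-k}W_{k,i}$, where $\gauss{i-k}{i-1-k}=\gauss{i-k}{1}\neq0$; since $W_{i-1,i}\bm{u}=0$ by the definition of $\harm{i}$, we conclude $W_{k,i}\bm{u}=0$. (In other words, for $k<i$ the operator $W_{k,i}$ factors through $W_{i-1,i}$.)

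For (2): apply \eqref{FW 4.6} with $(k,\ell)$ replaced by $(k,i)$ and evaluate at $\bm{u}$, obtaining $\overline{W}_{\! k,i}\bm{u}=\sum_{h=0}^{\min\{k,i\}}(-1)^hq^{\binom{h}{2}}W_{k,h}W_{h,i}\bm{u}$. Every term with $h<i$ vanishes by step (1) applied with $h$ in place of $k$. Hence, if $k\geqslant i$ only the $h=i$ term survives and gives $(-1)^iq^{\binom{i}{2}}W_{k,i}W_{i,i}\bm{u}=(-1)^iq^{\binom{i}{2}}W_{k,i}\bm{u}$ (since $W_{i,i}=I_i$); and if $k<i$ all terms vanish, so $\overline{W}_{\! k,i}\bm{u}=0=(-1)^iq^{\binom{i}{2}}W_{k,i}\bm{u}$, the last equality again by step (1). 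This establishes (2) for every $k$.

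For (3): suppose $k>n-i$, so that $k+i>n$. Then any $x\in\subconstituent{k}$ and $y\in\subconstituent{i}$ satisfy $\dim(x\cap y)=k+i-\dim(x+y)\geqslant k+i-n>0$, so $x\cap y\notin\subconstituent{0}$, and therefore $\overline{W}_{\! k,i}$ is the zero matrix. Substituting this into the identity of step (2) yields $0=(-1)^iq^{\binom{i}{2}}W_{k,i}\bm{u}$, hence $W_{k,i}\bm{u}=0$.

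The one place that needs an actual idea, and what I would flag as the crux, is this last part: instead of attacking $W_{k,i}\bm{u}=0$ for $k>n-i$ head-on --- say by expanding $\|W_{k,i}\bm{u}\|^2=\bm{u}^{\mathsf{T}}W_{i,k}W_{k,i}\bm{u}$ and wrestling with Gaussian-coefficient identities --- one should route it through the $\overline{W}$-versus-$W$ relation (2) together with the elementary observation that $\overline{W}_{\! k,i}$ vanishes identically once $k+i>n$. The remaining steps are routine manipulations of \eqref{FW 4.2} and \eqref{FW 4.6}.
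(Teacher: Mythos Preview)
Your proof is correct and follows exactly the route the paper intends: first use \eqref{FW 4.2} (via the factorization $W_{k,i-1}W_{i-1,i}=\gauss{i-k}{1}W_{k,i}$) to get $W_{k,i}\bm{u}=0$ for $k<i$, then plug into \eqref{FW 4.6} to obtain the displayed identity, and finally read off the case $k>n-i$ from the trivial vanishing of $\overline{W}_{\!k,i}$. The paper's proof compresses these three steps into two sentences, but the logical content is identical; the only ingredient you make explicit that the paper leaves implicit is the dimension count showing $\overline{W}_{\!k,i}=0$ when $k+i>n$.
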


\begin{proof}
From \eqref{FW 4.2} it follows that $W_{k,i} \bm{u} =0$ if $k<i$.
The other assertions follow from this and \eqref{FW 4.6}.
\end{proof}

The next two lemmas are easy consequences of Lemma \ref{Wkiu} and \eqref{FW 4.1}:

\begin{lem}\label{inner product}
For $i,j=0,\dots,\lfloor \frac{n}{2}\rfloor$, $k=0,\dots,n$, $\bm{u}\in \harm{i}$ and $\bm{v}\in \harm{j}$, we have
\begin{equation*}
	\bm{u}^{\mathsf{T}} W_{i,k} W_{k,j} \bm{v} = \delta_{i,j} q^{i(k-i)} \gauss{n-2i}{k-i} \bm{u}^{\mathsf{T}} \bm{v}.
\end{equation*}
\end{lem}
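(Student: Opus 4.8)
The plan is to reduce the identity to the two tools already available, Lemma~\ref{Wkiu} and \eqref{FW 4.1}, after rewriting the bilinear form in a more symmetric shape. Since $W_{i,k}=W_{k,i}^{\mathsf{T}}$ we have $\bm{u}^{\mathsf{T}} W_{i,k} W_{k,j} \bm{v} = (W_{k,i}\bm{u})^{\mathsf{T}} (W_{k,j}\bm{v})$, and as $W_{i,k}W_{k,j}=(W_{j,k}W_{k,i})^{\mathsf{T}}$ while the claimed right-hand side is unchanged under interchanging $(i,\bm{u})$ with $(j,\bm{v})$, I may assume $i\leqslant j$. If $i>k$ then $W_{k,i}\bm{u}=0$ by the first part of Lemma~\ref{Wkiu}, so both sides vanish (on the right, $\delta_{i,j}\gauss{n-2i}{k-i}=0$ since $k-i<0$ when $i=j$); hence it suffices to treat $i\leqslant j$ and $i\leqslant k$.

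In that case I would first use the ``moreover'' part of Lemma~\ref{Wkiu} to replace the right-hand factor by an $\overline{W}$, writing $W_{k,j}\bm{v}=(-1)^{j}q^{-\binom{j}{2}}\,\overline{W}_{\! k,j}\bm{v}$, and then, since $i\leqslant k$, apply \eqref{FW 4.1} to collapse the middle product $W_{i,k}\overline{W}_{\! k,j}=q^{j(k-i)}\gauss{n-i-j}{k-i}\overline{W}_{\! i,j}$. This yields
\begin{equation*}
	\bm{u}^{\mathsf{T}} W_{i,k} W_{k,j} \bm{v} = (-1)^{j}q^{\,j(k-i)-\binom{j}{2}}\gauss{n-i-j}{k-i}\,\bm{u}^{\mathsf{T}}\overline{W}_{\! i,j}\,\bm{v}.
\end{equation*}
Finally I would apply Lemma~\ref{Wkiu} once more to the surviving factor: $\overline{W}_{\! i,j}\bm{v}=(-1)^{j}q^{\binom{j}{2}}W_{i,j}\bm{v}$. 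If $i<j$ then $W_{i,j}\bm{v}=0$, so the expression vanishes, matching $\delta_{i,j}=0$; if $i=j$ then $W_{i,i}=I_{i}$ fixes $\bm{v}$, so $\overline{W}_{\! i,i}\bm{v}=(-1)^{i}q^{\binom{i}{2}}\bm{v}$, and putting the pieces together the signs and the two copies of $\binom{j}{2}$ in the exponent cancel, leaving precisely $q^{i(k-i)}\gauss{n-2i}{k-i}\,\bm{u}^{\mathsf{T}}\bm{v}$.

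I do not expect a genuine obstacle, since this is a short chain of the two cited identities; the only care needed is the bookkeeping of the powers of $q$ and a check that the degenerate ranges of $k$ stay consistent on both sides — for instance, when $k>n-i$ the left-hand side vanishes because $W_{k,i}\bm{u}=0$, while the right-hand side vanishes because $\gauss{n-2i}{k-i}=0$, so no separate case is required. The mildest nuisance is keeping the roles of $i$, $j$, $k$ straight when invoking Lemma~\ref{Wkiu}, which is stated with a fixed index pattern.
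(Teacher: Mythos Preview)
Your argument is correct and follows precisely the route the paper indicates: it derives the lemma as an easy consequence of Lemma~\ref{Wkiu} and \eqref{FW 4.1}, first converting $W_{k,j}\bm{v}$ to $\overline{W}_{\!k,j}\bm{v}$, collapsing via \eqref{FW 4.1}, and then invoking Lemma~\ref{Wkiu} once more. The case analysis and bookkeeping of signs and $q$-powers are all accurate, so there is nothing to add.
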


\begin{lem}\label{eigenvalue}
For $i=0,\dots,\lfloor \frac{n}{2}\rfloor$, $k,\ell=0,\dots,n$ and $\bm{u}\in \harm{i}$, we have
\begin{equation*}
	\overline{W}_{\! k,\ell} W_{\ell,i} \bm{u} = (-1)^i q^{\binom{i}{2}+k(\ell-i)} \gauss{n-k-i}{\ell-i} W_{k,i} \bm{u}.
\end{equation*}
\end{lem}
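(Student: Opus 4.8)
The plan is to read the identity directly off \eqref{FW 4.1} and Lemma \ref{Wkiu}, as the sentence introducing the lemma promises; there is essentially no new computation, only careful bookkeeping of indices. The key observation is that $\overline{W}_{\! k,\ell} W_{\ell,i}$ collapses, already at the level of matrices, to a scalar multiple of $\overline{W}_{\! k,i}$, after which Lemma \ref{Wkiu} rewrites $\overline{W}_{\! k,i}\bm u$ as $(-1)^i q^{\binom i 2} W_{k,i}\bm u$ for $\bm u\in\harm i$ --- precisely the shape of the claimed right-hand side.

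First I would transpose \eqref{FW 4.1}. Since $W_{k,\ell}=W_{\ell,k}^{\mathsf T}$ and $\overline{W}_{\! k,\ell}=\overline{W}_{\! \ell,k}^{\mathsf T}$, the identity $W_{i,k}\overline{W}_{\! k,\ell}=q^{\ell(k-i)}\gauss{n-i-\ell}{k-i}\overline{W}_{\! i,\ell}$ (valid for $i\leqslant k$) transposes to $\overline{W}_{\! \ell,k}W_{k,i}=q^{\ell(k-i)}\gauss{n-i-\ell}{k-i}\overline{W}_{\! \ell,i}$, and after relabelling the indices so that the contracted index becomes $\ell$ and the two outer indices become $k$ and $i$, this reads
\begin{equation*}
	\overline{W}_{\! k,\ell}W_{\ell,i}=q^{k(\ell-i)}\gauss{n-k-i}{\ell-i}\overline{W}_{\! k,i},\qquad i\leqslant\ell.
\end{equation*}
Applying both sides to $\bm u\in\harm i$ and substituting $\overline{W}_{\! k,i}\bm u=(-1)^i q^{\binom i 2}W_{k,i}\bm u$ from Lemma \ref{Wkiu} then yields the asserted formula in the range $i\leqslant\ell$.

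It remains to dispose of the case $i>\ell$. Here $W_{\ell,i}\bm u=0$ by Lemma \ref{Wkiu}, so the left-hand side vanishes, while on the right-hand side $\gauss{n-k-i}{\ell-i}=0$ because $\ell-i<0$, under the convention that a Gaussian coefficient with a negative argument equals $0$; hence both sides are zero. (The same conventions take care of the degenerate subcases within $i\leqslant\ell$, e.g.\ $k>n-i$: the displayed matrix identity reduces both sides to $0$ via Lemma \ref{Wkiu}, so no separate argument is needed.)

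I do not anticipate any real obstacle: the entire content lies in the two cited facts. The only points demanding attention are carrying out the transpose-and-relabel step of \eqref{FW 4.1} so that the power of $q$ comes out as $q^{k(\ell-i)}$ and the coefficient as $\gauss{n-k-i}{\ell-i}$ (and not some permuted version), and remembering to invoke the sign and zero conventions so that a single formula covers all admissible $i,k,\ell$.
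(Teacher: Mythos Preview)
Your argument is correct and matches the paper's intended approach: the paper states only that the lemma is an easy consequence of Lemma~\ref{Wkiu} and \eqref{FW 4.1}, and your transpose-and-relabel of \eqref{FW 4.1} followed by Lemma~\ref{Wkiu} is precisely how one unpacks that remark. The handling of the boundary case $i>\ell$ via the Gaussian-coefficient convention is also appropriate.
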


For $i=0,\dots,\lfloor \frac{n}{2} \rfloor$, we fix an orthonormal basis $\bm{u}_{i,1}, \bm{u}_{i,2}, \dots, \bm{u}_{i,d_i}$ of $U_i$, where $d_i:=\dim \harm{i} \ \big(\!\! =\gauss{n}{i} -\gauss{n}{i-1} \big)$.
We moreover define
\begin{equation*}
	\bm{u}_{i,r}^k = q^{-\frac{i(k-i)}{2}} \gauss{n-2i}{k-i}^{-\frac{1}{2}} W_{k,i} \bm{u}_{i,r} \quad (r=1,\dots,d_i, \, k=i,\dots,n-i).
\end{equation*}
From Lemmas \ref{inner product} and \ref{eigenvalue} it follows that

\begin{prop}\label{block-diagonalization}
The $\bm{u}_{i,r}^k$ form an orthonormal basis of $\mathbb{R}^{\subconstituent{}}$.
Furthermore, for $i=0,\dots,\lfloor \frac{n}{2} \rfloor$, $\ell=i,\dots,n-i$, $k=0,\dots,n$ and $r=1,\dots,d_i$, we have
\begin{equation*}
	\overline{W}_{\! k,\ell} \bm{u}_{i,r}^{\ell} = \theta_i^{k,\ell} \bm{u}_{i,r}^k,
\end{equation*}
where $\bm{u}_{i,r}^k:=0$ if $k<i$ or $k>n-i$, and
\begin{equation*}
	\theta_i^{k,\ell} = (-1)^i q^{\binom{i}{2} +k\ell -\frac{i(k+\ell)}{2} } \gauss{n-k-i}{\ell-i} \gauss{n-2i}{k-i}^{\frac{1}{2}} \gauss{n-2i}{\ell-i}^{-\frac{1}{2}}.
\end{equation*}
\end{prop}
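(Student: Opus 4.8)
The plan is to derive everything directly from Lemmas \ref{Wkiu}, \ref{inner product} and \ref{eigenvalue}; no new idea is needed beyond careful bookkeeping, so I expect no genuine obstacle here.

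First I would establish orthonormality. If $k\ne k'$, then $\bm{u}_{i,r}^k$ and $\bm{u}_{j,s}^{k'}$ are supported on the disjoint sets $\subconstituent{k}$ and $\subconstituent{k'}$, hence orthogonal. If $k=k'$ (so that $i,j\leqslant k\leqslant n-i,n-j$), I would expand, using $W_{i,k}=W_{k,i}^{\mathsf{T}}$,
\begin{equation*}
	(\bm{u}_{i,r}^k)^{\mathsf{T}} \bm{u}_{j,s}^k = q^{-\frac{i(k-i)}{2}-\frac{j(k-j)}{2}} \gauss{n-2i}{k-i}^{-\frac{1}{2}} \gauss{n-2j}{k-j}^{-\frac{1}{2}} \bm{u}_{i,r}^{\mathsf{T}} W_{i,k} W_{k,j} \bm{u}_{j,s},
\end{equation*}
and apply Lemma \ref{inner product}: the Kronecker delta forces $i=j$, and then the remaining scalar factors cancel, leaving $\delta_{r,s}$. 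So the $\bm{u}_{i,r}^k$ are orthonormal.

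Next, the counting. Since $d_i=\gauss{n}{i}-\gauss{n}{i-1}$, we have $\sum_{i=0}^m d_i=\gauss{n}{m}$ by telescoping. For a fixed $k$, the vectors $\bm{u}_{i,r}^k$ which are not set to $0$ by convention are those with $i\leqslant k\leqslant n-i$, that is, $0\leqslant i\leqslant\min\{k,n-k\}$; hence there are $\sum_{i=0}^{\min\{k,n-k\}}d_i=\gauss{n}{\min\{k,n-k\}}=\gauss{n}{k}=\dim\mathbb{R}^{\subconstituent{k}}$ of them. Being orthonormal, they form an orthonormal basis of $\mathbb{R}^{\subconstituent{k}}$, and taking the union over $k=0,\dots,n$ yields an orthonormal basis of $\mathbb{R}^{\subconstituent{}}=\bigoplus_{k=0}^n\mathbb{R}^{\subconstituent{k}}$.

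Finally, for the action of $\overline{W}_{\!k,\ell}$: by the definition of $\bm{u}_{i,r}^{\ell}$ (legitimate since $i\leqslant\ell\leqslant n-i$) followed by Lemma \ref{eigenvalue},
\begin{equation*}
	\overline{W}_{\!k,\ell}\,\bm{u}_{i,r}^{\ell} = q^{-\frac{i(\ell-i)}{2}}\gauss{n-2i}{\ell-i}^{-\frac{1}{2}}\,\overline{W}_{\!k,\ell}W_{\ell,i}\bm{u}_{i,r} = (-1)^i q^{\binom{i}{2}+k(\ell-i)-\frac{i(\ell-i)}{2}}\gauss{n-2i}{\ell-i}^{-\frac{1}{2}}\gauss{n-k-i}{\ell-i}\,W_{k,i}\bm{u}_{i,r}.
\end{equation*}
If $i\leqslant k\leqslant n-i$ then $W_{k,i}\bm{u}_{i,r}=q^{\frac{i(k-i)}{2}}\gauss{n-2i}{k-i}^{\frac{1}{2}}\bm{u}_{i,r}^k$ by definition; if instead $k<i$ or $k>n-i$ then $W_{k,i}\bm{u}_{i,r}=0$ by Lemma \ref{Wkiu}, consistently with the convention $\bm{u}_{i,r}^k:=0$ (and in the case $k>n-i$ one also has $\gauss{n-k-i}{\ell-i}=0$). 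Substituting and collecting the exponent of $q$ gives $\binom{i}{2}+k\ell-\frac{i(k+\ell)}{2}$, which is exactly the exponent in $\theta_i^{k,\ell}$, and the sign and Gaussian factors match as well. The only point requiring any care is this last exponent bookkeeping together with the handling of the degenerate ranges of $k$; the proposition is otherwise a mechanical consequence of the three preceding lemmas.
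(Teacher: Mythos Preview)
Your proof is correct and is exactly the intended expansion of the paper's one-line justification ``From Lemmas \ref{inner product} and \ref{eigenvalue} it follows that\dots''. The orthonormality via Lemma \ref{inner product}, the telescoping dimension count, and the application of Lemma \ref{eigenvalue} with the exponent bookkeeping are precisely what the paper leaves implicit; there is no alternative route here to compare.
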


\noindent
Notice that $\theta_i^{k,\ell}=0$ if $k<i$ or $k>n-i$, and that $\theta_i^{k,\ell}=\theta_i^{\ell,k}$ for $k,\ell=i,\dots,n-i$.
We also need the following observation:

\begin{lem}\label{J}
For $i=0,\dots,\lfloor \frac{n}{2} \rfloor$, $\ell=i,\dots,n-i$, $k=0,\dots,n$ and $r=1,\dots,d_i$, we have
\begin{equation*}
	J_{k,\ell} \bm{u}_{i,r}^{\ell} = \delta_{i,0} \gauss{n}{k}^{\frac{1}{2}} \gauss{n}{\ell}^{\frac{1}{2}} \bm{u}_{0,1}^k.
\end{equation*}
\end{lem}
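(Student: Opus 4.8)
The plan is to reduce the identity to the explicit form of $\bm{u}_{0,1}^{k}$ coming from Proposition~\ref{block-diagonalization} together with the orthonormality asserted there; nothing beyond the set-up already in place is needed.

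First I would note that $\bm{u}_{i,r}^{\ell}$ lies in $\mathbb{R}^{\subconstituent{\ell}}$, so that, writing $\bm{1}\in\mathbb{R}^{\subconstituent{}}$ for the all ones vector, $J I_{\ell}\bm{u}_{i,r}^{\ell}=J\bm{u}_{i,r}^{\ell}=\bigl(\bm{1}^{\mathsf{T}}\bm{u}_{i,r}^{\ell}\bigr)\bm{1}$, and hence $J_{k,\ell}\bm{u}_{i,r}^{\ell}=\bigl(\bm{1}^{\mathsf{T}}\bm{u}_{i,r}^{\ell}\bigr)I_{k}\bm{1}$, where $I_{k}\bm{1}$ is the characteristic vector of $\subconstituent{k}$. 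I would then recognise $I_{k}\bm{1}$ as a scalar multiple of $\bm{u}_{0,1}^{k}$: since $\subconstituent{0}$ consists of the single point $0$, we have $d_{0}=1$ and $\bm{u}_{0,1}$ is the standard basis vector on that coordinate, so unwinding the definition of $\bm{u}_{0,1}^{k}$ and using that every entry of $W_{k,0}$ in column $0$ with row in $\subconstituent{k}$ equals $1$ gives $\bm{u}_{0,1}^{k}=\gauss{n}{k}^{-\frac{1}{2}}W_{k,0}\bm{u}_{0,1}=\gauss{n}{k}^{-\frac{1}{2}}I_{k}\bm{1}$. Thus $I_{k}\bm{1}=\gauss{n}{k}^{\frac{1}{2}}\bm{u}_{0,1}^{k}$, and likewise $I_{\ell}\bm{1}=\gauss{n}{\ell}^{\frac{1}{2}}\bm{u}_{0,1}^{\ell}$.

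It then remains to evaluate the scalar $\bm{1}^{\mathsf{T}}\bm{u}_{i,r}^{\ell}$. Because $\bm{u}_{i,r}^{\ell}$ is supported on $\subconstituent{\ell}$, this equals $(I_{\ell}\bm{1})^{\mathsf{T}}\bm{u}_{i,r}^{\ell}=\gauss{n}{\ell}^{\frac{1}{2}}\bigl(\bm{u}_{0,1}^{\ell}\bigr)^{\mathsf{T}}\bm{u}_{i,r}^{\ell}$, which by the orthonormality of the $\bm{u}_{i,r}^{k}$ is $\gauss{n}{\ell}^{\frac{1}{2}}\delta_{i,0}$ (note that when $i=0$ the index $r$ is necessarily $1$). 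Substituting back gives $J_{k,\ell}\bm{u}_{i,r}^{\ell}=\delta_{i,0}\gauss{n}{k}^{\frac{1}{2}}\gauss{n}{\ell}^{\frac{1}{2}}\bm{u}_{0,1}^{k}$, as claimed. I do not anticipate any genuine obstacle here; the only points requiring care are keeping track of which coordinate subspace $\mathbb{R}^{\subconstituent{m}}$ each vector belongs to, and the elementary observation that $W_{k,0}$ simply copies the single point $\{0\}$ onto all of $\subconstituent{k}$.
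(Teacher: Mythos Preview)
Your argument is correct. The paper itself does not supply a proof of this lemma; it is stated merely as an ``observation'' immediately after Proposition~\ref{block-diagonalization}, presumably because the facts you use---that $J_{k,\ell}$ is rank one with image spanned by the characteristic vector of $\subconstituent{k}$, and that this characteristic vector equals $\gauss{n}{k}^{\frac{1}{2}}\bm{u}_{0,1}^{k}$---are immediate from the definitions and the orthonormality in Proposition~\ref{block-diagonalization}. Your write-up makes these steps explicit and is exactly the kind of routine verification the paper is leaving to the reader.
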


\section{Proof of Theorem \ref{feasible solutions}}\label{sec: optimal feasible solutions}

In this section, we shall prove Theorem \ref{feasible solutions}.
Let $k,\ell=1,\dots,\lfloor\frac{n}{2}\rfloor$, and recall that we are assuming that $k>\ell$.\footnote{See footnote \ref{k=l}. The discussions in this section work for the case $k=\ell$ as well, and in fact become much simpler.}
Let $\lambda>0$.
Then $a(\lambda)>0$, so that \eqref{one-parameter family} gives a feasible solution to (D) if and only if the matrix
\begin{align*}
	S (\lambda) =& \, \frac{1}{2} \gauss{n-1}{k-1}^{\frac{1}{2}} \gauss{n-1}{\ell-1}^{\frac{1}{2}} \left( I_k + I_{\ell} \right) -\frac{1}{2} \left( J_{k,\ell} + J_{\ell,k} \right) \\
	& \qquad \qquad - a (\lambda) \, \overline{W}_{\! k,k} - \lambda \, \overline{W}_{\! \ell,\ell} - b (\lambda) \, \left( \overline{W}_{\! k,\ell} + \overline{W}_{\! \ell,k} \right)
\end{align*}
is positive semidefinite.
By virtue of Proposition \ref{block-diagonalization} and Lemma \ref{J}, $S(\lambda)\succcurlyeq 0$ if and only if
\begin{gather*}
	S_0 (\lambda) = \begin{pmatrix} \dfrac{1}{2} \dgauss{n-1}{k-1}^{\frac{1}{2}} \dgauss{n-1}{\ell-1}^{\frac{1}{2}} - \theta_0^{k,k} a(\lambda) & -\dfrac{1}{2} \dgauss{n}{k}^{\frac{1}{2}} \dgauss{n}{\ell}^{\frac{1}{2}} - \theta_0^{k,\ell} b(\lambda) \\ -\dfrac{1}{2} \dgauss{n}{k}^{\frac{1}{2}} \dgauss{n}{\ell}^{\frac{1}{2}} - \theta_0^{k,\ell} b(\lambda) & \dfrac{1}{2} \dgauss{n-1}{k-1}^{\frac{1}{2}} \dgauss{n-1}{\ell-1}^{\frac{1}{2}} - \theta_0^{\ell,\ell} \lambda \end{pmatrix} \succcurlyeq 0, \\
	S_i (\lambda) = \begin{pmatrix} \dfrac{1}{2} \dgauss{n-1}{k-1}^{\frac{1}{2}} \dgauss{n-1}{\ell-1}^{\frac{1}{2}} - \theta_i^{k,k} a(\lambda) & - \theta_i^{k,\ell} b(\lambda) \\ - \theta_i^{k,\ell} b(\lambda) & \dfrac{1}{2} \dgauss{n-1}{k-1}^{\frac{1}{2}} \dgauss{n-1}{\ell-1}^{\frac{1}{2}} - \theta_i^{\ell,\ell} \lambda \end{pmatrix} \succcurlyeq 0,
\end{gather*}
for $i=1,\dots,\ell$, and
\begin{equation*}
	s_i (\lambda) = \frac{1}{2} \gauss{n-1}{k-1}^{\frac{1}{2}} \gauss{n-1}{\ell-1}^{\frac{1}{2}} - \theta_i^{k,k} a(\lambda) \geqslant 0,
\end{equation*}
for $i=\ell+1,\dots,k$.

For $i=0,\dots,k$, we now calculate:
\begin{align}
	\left| \theta_i^{k,k} a(0) \right| =& \, \frac{1}{2} q^{\binom{i}{2} +\ell -ik} \frac{ q^{k-\ell} - 1 }{ q^k-1 } \gauss{n-k-i}{k-i} \gauss{n-k}{k}^{-1} \gauss{n-1}{k-1}^{\frac{1}{2}} \gauss{n-1}{\ell-1}^{\frac{1}{2}} \label{evaluation - diagonal} \\
	\leqslant & \, \frac{1}{2} q^{\ell - \frac{ik}{2} } \frac{ q^{k-\ell} - 1 }{ q^k-1 } \gauss{n-1}{k-1}^{\frac{1}{2}} \gauss{n-1}{\ell-1}^{\frac{1}{2}} \notag \\
	< & \begin{cases} \dfrac{1}{2} \dgauss{n-1}{k-1}^{\frac{1}{2}} \dgauss{n-1}{\ell-1}^{\frac{1}{2}} & \text{for} \ i=0,1, \\[.1in] \dfrac{1}{2q} \dgauss{n-1}{k-1}^{\frac{1}{2}} \dgauss{n-1}{\ell-1}^{\frac{1}{2}} & \text{for} \ i=2,\dots,k. \end{cases} \notag
\end{align}
Therefore the diagonal entries of the $S_i (\lambda)$ ($i=0,\dots,\ell$) as well as the $s_i (\lambda)$ ($i=\ell+1,\dots,k$) are positive, provided that $\lambda$ is sufficiently small.
Then it is readily verified that $S_0 (\lambda)$ and $S_1 (\lambda)$ are scalar multiples of the rank $1$ matrices\footnote{In fact, we arrived at this condition conversely with the help of the duality, i.e., that $S \bullet X_{\mathscr{F}\! ,\mathscr{G}}=0$ for `extremal' cross-intersecting families $\mathscr{F}$ and $\mathscr{G}$ given in Theorem \ref{cross-intersection theorem}. The constraints of (D) and the condition together led us to the one-parameter family \eqref{one-parameter family}.}
\begin{equation*}
	\begin{pmatrix} q^{\ell}-1 & -(q^k-1)^{\frac{1}{2}} (q^{\ell}-1)^{\frac{1}{2}} \\ -(q^k-1)^{\frac{1}{2}} (q^{\ell}-1)^{\frac{1}{2}} & q^k-1 \end{pmatrix}
\end{equation*}
and
\begin{equation*}
	\begin{pmatrix} q^{\ell} ( q^{n-\ell}-1 ) & -q^{\frac{k+\ell}{2}} (q^{n-k}-1)^{\frac{1}{2}} (q^{n-\ell}-1)^{\frac{1}{2}} \\ -q^{\frac{k+\ell}{2}} (q^{n-k}-1)^{\frac{1}{2}} (q^{n-\ell}-1)^{\frac{1}{2}} & q^k ( q^{n-k}-1 ) \end{pmatrix},
\end{equation*}
respectively, from which it follows that $S_0 (\lambda) \succcurlyeq 0$ and $S_1 (\lambda) \succcurlyeq 0$.
Hence it is enough to show that $\det ( S_i (0) )>0$ for $i=2,\dots,\ell$, i.e.,
\begin{equation*}
	\frac{1}{2} \gauss{n-1}{k-1}^{\frac{1}{2}} \gauss{n-1}{\ell-1}^{\frac{1}{2}} \left( \frac{1}{2} \gauss{n-1}{k-1}^{\frac{1}{2}} \gauss{n-1}{\ell-1}^{\frac{1}{2}} - \theta_i^{k,k} a(0) \right) > \left( \theta_i^{k,\ell} b(0) \right)^2
\end{equation*}
for $i=2,\dots,\ell$.
On one hand, it follows from \eqref{evaluation - diagonal} that the left-hand side is strictly larger than $\frac{1}{4} ( 1-\frac{1}{q} )\gauss{n-1}{k-1} \gauss{n-1}{\ell-1}$.
On the other hand, since
\begin{equation*}
	\left( \frac{ \theta_{i+1}^{k,\ell} }{ \theta_i^{k,\ell} } \right)^2 = \frac{ q^{ 2i-k-\ell } ( q^{k-i} -1 ) ( q^{\ell-i} -1 ) }{ ( q^{n-k-i} -1 ) ( q^{n-\ell-i} -1 ) } < 1 \quad (i=2,\dots,\ell-1),
\end{equation*}
it follows that
\begin{align*}
	\left( \theta_i^{k,\ell} b(0) \right)^2 \leqslant & \left( \theta_2^{k,\ell} b(0) \right)^2 \\
	=& \, \frac{1}{4} q^{2-2k} \frac{ q^{k-1}-1 }{ q^{n-k-1}-1 }  \frac{ q^{\ell-1}-1 }{ q^{n-\ell-1} -1 } \frac{ q^{n-\ell}-1 }{ q^{n-k}-1 } \gauss{n-1}{k-1} \gauss{n-1}{\ell-1} \\
	\leqslant & \, \frac{1}{4} q^{2-2k} \frac{ q^{n-\ell}-1 }{ q^{n-k}-1 } \gauss{n-1}{k-1} \gauss{n-1}{\ell-1} \\
	< & \, \frac{1}{4} q^{ (2-2k) + (k-\ell+1) } \gauss{n-1}{k-1} \gauss{n-1}{\ell-1} \\
	\leqslant & \, \frac{1}{4q} \gauss{n-1}{k-1} \gauss{n-1}{\ell-1}
\end{align*}
for $i=2,\dots,\ell$.
Hence $\det ( S_i(0) )>0$ for $i=2,\dots,\ell$, and the proof is complete.

\section*{Acknowledgements}
The authors thank Norihide Tokushige for valuable discussions and comments.
SS was supported in part by JSPS Research Fellowships for Young Scientists.
HT was supported in part by JSPS Grant-in-Aid for Scientific Research No.~23740002.

\end{document}